\newcommand{\Vbar}{\overline{V}}
\newcommand{\onto}{\twoheadrightarrow}
\newlength{\ownl}
\newcommand{\End}{{\operatorname{End}\,}}
\newcommand{\Frob}{{\operatorname{Frob}}}
\newcommand{\Gal}{{\operatorname{Gal}\,}}
\newcommand{\ind}{{\operatorname{ind}}}
\newcommand{\Symm}{{\operatorname{Symm}}}
\newcommand{\GL}{\operatorname{GL}}
\newcommand{\F}{{\mathbb{F}}}
\newcommand{\Q}{{\mathbb{Q}}}
\newcommand{\Z}{{\mathbb{Z}}}
\newcommand{\Thetabar       }{\overline{\Theta}}
\def\RCS$#1: #2 ${\expandafter\def\csname RCS#1\endcsname{#2}}
\newcommand{\Qp}{\Q_p}
\newcommand{\GQp}{G_{\Qp}}
\newcommand{\Zp}{\Z_p}
\newcommand{\Qpbar}{\overline{\Q}_p}
\newcommand{\Fpbar}{\overline{\F}_p}
\newcommand{\Fp}{{\F_p}}
\newcommand{\Zpbar}{\overline{\Z}_p}
\def\smallmat#1#2#3#4{\bigl(\begin{smallmatrix}{#1}&{#2}\\{#3}&{#4}\end{smallmatrix}\bigr)}
\newtheorem{ithm}{Theorem}
\newtheorem{thm}{Theorem}[section]
\newtheorem{corollary}[thm]{Corollary}
\newtheorem{cor}[thm]{Corollary}
 \newtheorem{lemma}[thm]{Lemma}
\newtheorem{lem}[thm]{Lemma}
 \theoremstyle{definition}
 \theoremstyle{definition}
\newtheorem{defn}[thm]{Definition} \theoremstyle{remark}
\theoremstyle{definition}
\begin{document}
\title[Explicit reduction]{Explicit reduction modulo $p$ of certain $2$-dimensional
  crystalline representations, II}

\author{Kevin Buzzard} \email{buzzard@imperial.ac.uk} \address{Department of
  Mathematics, Imperial College London}  \author{Toby Gee} \email{toby.gee@imperial.ac.uk} \address{Department of
  Mathematics, Imperial College London}   \subjclass[2000]{11F33.}
\begin{abstract}We complete the calculations begun in
  \cite{MR2511912}, using the $p$-adic local Langlands correspondence
  for $\GL_2(\Qp)$ to give a complete description of the reduction
  modulo $p$ of the $2$-dimensional crystalline representations of
  $\GQp$ of slope less than $1$, when $p>2$.

\end{abstract}
\maketitle

\section{Introduction}\label{sec:intro}
This paper is a sequel to \cite{MR2511912}, and we refer the reader to
the introduction to that paper for a detailed discussion of (and the
motivation for) the problem solved in this paper. Another good
reference is \S5.2 of~\cite{bergerSemBourb}. 
Let $p$ be a prime,
choose an algebraic closure $\Qpbar$ of $\Qp$,
let $\Zpbar$ be the integers in $\Qpbar$ and let $\Fpbar$ be the
residue field of $\Zpbar$. We let $v$ be the $p$-adic valuation on
$\Qpbar^\times$, normalised so that $v(p)=1$. We set $v(0)=+\infty$.
We decree that the cyclotomic
character has Hodge--Tate weight $+1$.
We recall that given a positive
integer $k\ge 2$ and an element $a\in\Qpbar$ with $v(a)>0$ there is a
uniquely determined two-dimensional crystalline representation
$V_{k,a}$ of $\Gal(\Qpbar/\Qp)$ with Hodge--Tate weights $0$ and
$k-1$, determinant the cyclotomic character to the power of
$k-1$, and with the characteristic polynomial of crystalline
Frobenius on the contravariant Dieudonne module being $X^2-aX+p^{k-1}$
(see for example \S3.1 of~\cite{breuil2} for a detailed construction
of this representation). 
Let $\Vbar_{k,a}$ denote the
semisimplification of the reduction of $V_{k,a}$ modulo the maximal
ideal of $\Zpbar$. Let $\omega$ denote the mod~$p$ cyclotomic
character, and if $p+1\nmid n$ let $\ind(\omega_2^n)$ denote the unique
irreducible
2-dimensional representation of $\GQp$ with determinant $\omega^n$
and with restriction to inertia equal to $\omega_2^n\oplus\omega_2^{pn}$,
with $\omega_2$ the ``niveau~2'' character of inertia (see for
example \S1.1 of~\cite{bergerSemBourb}).

Our main result is the following, which is an
immediate consequence of Theorem 1.6 of~\cite{MR2511912} (the case
$k\not\equiv3$~mod~$p-1$), Theorem~3.2.1 of~\cite{MR2642408} (the
cases $k=3$ and $k=p+2$), and Corollary
\ref{cor: explicit description of Vbar in the 3 cases} below.
Recall $k\geq2$; let $[k-2]$
denote denote the integer in the set $\{0,1,\ldots,p-2\}$ congruent to $k-2$
mod $p-1$, and set $t=[k-2]+1$, so $1\leq t\leq p-1$.

\begin{ithm}Assume that $p>2$ and that $0<v(a)<1$. Then
  $\Vbar_{k,a}\cong\ind(\omega_2^t)$ is irreducible, unless $k>3$,
$k\equiv 3
  \pmod{p-1}$, and $v(k-3)+1+v(a)\le v(a^2-(k-2)p)$, in which case
  $\omega^{-1}\otimes\Vbar_{k,a}$ is unramified, and the trace of
a geometric Frobenius $\Frob_p$
 on $\omega^{-1}\otimes\Vbar_{k,a}$ is $\overline{\tau}$, where
  $\tau=\frac{(k-2)p-a^2}{ap(k-3)}$.
\end{ithm}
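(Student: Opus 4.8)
The plan is to prove the theorem as an assembly of the three cited results, organised according to the residue of $k$ modulo $p-1$, since the exceptional clause can only be triggered when $k\equiv 3\pmod{p-1}$. The standing hypotheses throughout are $p>2$, $k\geq 2$, and $0<v(a)<1$.

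First I would dispose of the case $k\not\equiv 3\pmod{p-1}$. Here the exceptional hypothesis fails automatically, so the theorem asserts only that $\Vbar_{k,a}$ is irreducible and isomorphic to $\ind(\omega_2^t)$; this is exactly the content of Theorem 1.6 of \cite{MR2511912}, and one has merely to match the normalisations of the weight $t=[k-2]+1$ and of the niveau~$2$ character $\omega_2$ with those of that paper.

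The remaining case is $k\equiv 3\pmod{p-1}$, in which $[k-2]=1$ and hence $t=2$, so the claimed irreducible shape is $\ind(\omega_2^2)$ (well defined and irreducible since $p+1\nmid 2$ as $p>2$). I would first treat the two small weights $k=3$ and $k=p+2$ using Theorem 3.2.1 of \cite{ber05}, which gives $\Vbar_{k,a}\cong\ind(\omega_2^2)$. To see that this is consistent with the statement, observe that for these weights the exceptional inequality cannot hold: for $k=3$ the clause requires $k>3$, while for $k=p+2$ one has $v(k-3)=v(p-1)=0$ and, since $2v(a)<2=v(p^2)$, also $v(a^2-(k-2)p)=v(a^2-p^2)=2v(a)$, so the inequality would read $1+v(a)\leq 2v(a)$, i.e.\ $v(a)\geq 1$, contrary to hypothesis. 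Thus both small weights land unambiguously in the irreducible branch. All remaining weights, namely $k>3$ with $k\equiv 3\pmod{p-1}$ and $k\neq p+2$, are covered by Corollary \ref{cor: explicit description of Vbar in the 3 cases}, which supplies precisely the stated dichotomy: either $\Vbar_{k,a}\cong\ind(\omega_2^2)$, or the inequality $v(k-3)+1+v(a)\leq v(a^2-(k-2)p)$ holds and $\omega^{-1}\otimes\Vbar_{k,a}$ is unramified with $\Frob_p$ acting with trace $\overline{\tau}$, where $\tau=\frac{(k-2)p-a^2}{ap(k-3)}$.

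Since the four families just described exhaust all admissible pairs $(k,a)$ and are pairwise disjoint, concatenating the three inputs proves the theorem. The only genuine verification beyond bookkeeping is the boundary computation above, confirming that the two Berger weights cannot accidentally satisfy the exceptional inequality; the substantive mathematics is concentrated entirely in Corollary \ref{cor: explicit description of Vbar in the 3 cases}, whose proof via the $p$-adic local Langlands correspondence for $\GL_2(\Qp)$ is where I expect the real difficulty to lie.
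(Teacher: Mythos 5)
Your proposal is correct and is essentially the paper's own proof: the theorem is stated there as an immediate consequence of exactly the same three inputs with the same case split (Theorem 1.6 of \cite{MR2511912} for $k\not\equiv 3\pmod{p-1}$, Theorem 3.2.1 of \cite{ber05} for $k=3,p+2$, and Corollary \ref{cor: explicit description of Vbar in the 3 cases} for the remaining weights, which as you note satisfy $r=k-2\geq 2p-1>p$), and your boundary check that the exceptional inequality is vacuous at $k=3$ and $k=p+2$ is a consistency detail the paper leaves implicit. One small caveat: Corollary \ref{cor: explicit description of Vbar in the 3 cases} as printed gives the trace as $\overline{t}$ with $t=\frac{a^2-rp}{ap(r-1)}=-\tau$, a sign slip relative to what Corollary \ref{cor: description of thetabar in the three cases} and Lemma \ref{lem: p-adic LL lets us read off rhobar} actually yield, so your transcription of it with $\tau=\frac{(k-2)p-a^2}{ap(k-3)}$ silently corrects the paper's typo rather than quoting it literally.
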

Note that when $k\equiv 3 \pmod{p-1}$ and $v(k-3)+1+v(a)\le v(a^2-(k-2)p)$,
the $\tau$ in the theorem is in $\Zpbar$, and its reduction is also
the trace of an arithmetic Frobenius, because $\omega^{-1}\otimes\Vbar_{k,a}$
has trivial determinant in this case.

For a fixed~$k$ one can look at the behaviour of the representation
$V_{k,a}$ as $a$ varies through the annulus $0<|a|<1$, and we can give
a more prosaic description of what our theorem says: it
says that often $V_{k,a}=\ind(\omega_2)$ is constant on this
annulus, the only exception being when $k>3$, $k\equiv3$~mod~$p-1$
and $k\not\equiv2$~mod~$p$, in which case the representation
is $\ind(\omega_2)$ everywhere other than two small closed
discs with centre $\pm\sqrt{(k-2)p}$ and radius $p^{-1-v(k-3)}$.
Note that both these small discs are contained in the annulus $v(a)=1/2$,
and that as $k>3$ tends to 3 $p$-adically the radius of the discs tends
to zero. Note also that the limit of $\pm\sqrt{(k-2)p}$
as $k>3$ tends to~3 $p$-adically is $\pm\sqrt{p}$, however $a=\pm\sqrt{p}$
is not in any of the discs; furthermore the intersection of all these
discs as $k$ varies is empty, and in particular our result does not
contradict the local constancy results of~\cite{BergerLocCst},
contrary to one's initial reaction.

This theorem was proved in the case $k\not\equiv 3\pmod{p-1}$ in
\cite{MR2511912} using the $p$-adic local Langlands correspondence for
$\GL_2(\Qp)$. In the present paper we build on the results and methods
of \cite{MR2511912} to handle the case $k\equiv 3\pmod{p-1}$; as one
might expect from the statement of the theorem, the
necessary calculations are more complicated in this case, because
we have to control what is going on modulo an arbitrarily large
power of~$p$ in the auxiliary calculations.

We would like to thank Christophe Breuil for sharing with us the
details of his unpublished calculations for $k=2p+1$, which were the
starting point for this article. We would also like to thank
Mathieu Vienney for pointing out a howler of a typo
in the statement of the main theorem in an earlier version
of this paper, and John Enns for pointing out another typo later in
the paper. We would also like to thank the anonymous referee for a
careful reading, and several helpful corrections and improvements.

\subsection{Notation} Throughout the paper, $p$ denotes an odd
prime, and $r$ and~$n$ are integers.
If $\lambda\in\Fp$, we write $[\lambda]\in\Zp$ for its
Teichmueller lift.

\section{Combinatorial Lemmas}\label{sec:combinatorial lemmas}In this
section we prove some elementary lemmas about congruences of binomial
coefficients, that we will make repeated use of in the rest of the
paper.

\begin{lemma}\label{binomial}
  Assume that $p>2$ and that $r\in\Z_{\geq2}$, and write
  $t=v(r-1)\geq0$. Then \begin{enumerate}
  \item for all integers $n\geq2$ we have
$v\left(\binom{r}{n}\right)+n\geq t+2$, and
\item for all integers $n\geq1$, $v\left(\binom{r-1}{n}\right)+n\geq t+1$.
  \end{enumerate}

\end{lemma}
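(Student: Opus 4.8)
The plan is to reduce both inequalities to two elementary facts about $p$-adic valuations of integers, by applying the absorption identities
\[
\binom{r-1}{n}=\frac{r-1}{n}\binom{r-2}{n-1},\qquad \binom{r}{n}=\frac{r(r-1)}{n(n-1)}\binom{r-2}{n-2},
\]
valid for $n\geq 1$ (resp.\ $n\geq 2$) and any integer $r\geq 2$, as one checks from the factorial formula. The point of writing the binomials this way is that the factor $r-1$ now appears explicitly and contributes exactly $t=v(r-1)$, while the leftover binomial $\binom{r-2}{\cdot}$ is an ordinary integer (or zero), hence of valuation $\geq 0$; the extra gain needed to reach $t+1$ or $t+2$ will then be extracted from the denominator $n$ (resp.\ $n(n-1)$).

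The elementary input I isolate first is: for every integer $m\geq 1$ one has $v(m)\leq m-1$, and, when $p>2$, one has $v(m)\leq m-2$ for every integer $m\geq 2$. Both come from $p^{v(m)}\mid m$, so that $v(m)\leq\log_p m$: indeed $p^{m-1}\geq 2^{m-1}\geq m$ gives the first, while for the second, if $p\nmid m$ then $v(m)=0\leq m-2$, and if $p\mid m$ then $m\geq p\geq 3$ and $p^{m-2}\geq 3^{m-2}\geq m$. Granting this, part~(2) is immediate: if $n\geq r$ the binomial vanishes and there is nothing to prove, while for $1\leq n\leq r-1$ the first identity gives
\[
v\!\left(\binom{r-1}{n}\right)+n\;\geq\;\bigl(t-v(n)\bigr)+n\;=\;t+\bigl(n-v(n)\bigr)\;\geq\;t+1 .
\]

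For part~(1) the second identity yields, after discarding the nonnegative terms $v(r)$ and $v\!\left(\binom{r-2}{n-2}\right)$ (and noting the case $n>r$ is trivial),
\[
v\!\left(\binom{r}{n}\right)\;\geq\; t-\bigl(v(n)+v(n-1)\bigr),
\]
so everything reduces to showing $v(n)+v(n-1)\leq n-2$ for $n\geq 2$. This is exactly where $p>2$ enters, and it is the one genuinely delicate point: since $n$ and $n-1$ are coprime at most one is divisible by $p$, so $v(n)+v(n-1)$ equals whichever of the two is nonzero (or is $0$). If $p\mid n$ then $n\geq 3$ and $v(n)\leq n-2$ by the second elementary bound; if $p\mid(n-1)$ then $v(n-1)\leq(n-1)-1=n-2$ by the first; and otherwise the sum is $0\leq n-2$. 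The main obstacle is thus really just bookkeeping: obtaining the ``$+2$'' forces one to absorb two denominator factors rather than one (a single absorption leaves a stray $v(n)$ and only yields $t+1$), and one must use both coprimality of $n,n-1$ and $p>2$ to exclude the bad configuration $v(n)+v(n-1)=n-1$ that genuinely occurs for $p=2$ at $n=2$.
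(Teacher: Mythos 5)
Your proof is correct, and it takes a genuinely different route from the paper's. The paper computes $v\bigl(\binom{r}{n}\bigr)+n = v\bigl(r(r-1)\cdots(r-n+1)\bigr)-v(n!)+n \geq t+n-v(n!)$ and then kills the whole factorial denominator at once with the Legendre-type estimate $v(n!)\leq n/(p-1)$, finishing with a small numerical check (a ceiling computation that is checked separately for $p=3$); part (2) is handled the same way. You instead peel off only the denominator factors that matter, via the absorption identities $\binom{r-1}{n}=\frac{r-1}{n}\binom{r-2}{n-1}$ and $\binom{r}{n}=\frac{r(r-1)}{n(n-1)}\binom{r-2}{n-2}$, using the integrality of the leftover binomial to discard it, so that the problem reduces to $n-v(n)\geq 1$ and $n-v(n)-v(n-1)\geq 2$; these you settle with the crude logarithmic bounds $v(m)\leq m-1$ (all $m\geq1$) and $v(m)\leq m-2$ ($p>2$, $m\geq2$), together with the coprimality of $n$ and $n-1$. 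Your edge cases are handled properly: the vanishing binomials for $n\geq r$ are covered by the paper's convention $v(0)=+\infty$, and your case analysis correctly isolates where $p>2$ is used (the configuration $p=2$, $n=2$, where $v(n)+v(n-1)=n-1$, is exactly the failure mode). What each approach buys: the paper's $v(n!)\leq n/(p-1)$ bound is sharper per factor and scales better (it would yield stronger inequalities for large $p$ or for variants needing more than two powers of $p$), while your argument avoids any factorial estimate and any case check on $p$, and makes completely transparent both why the hypothesis $p>2$ is needed and why the gain saturates at $+2$ -- only two denominator factors are ever absorbed.
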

\begin{proof}
  \begin{enumerate}
  \item The left hand side is $v(r(r-1)(r-2)\cdots(r-(n-1)))-v(n!)+n$
which is at least $v(r-1)-v(n!)+n=t+n-v(n!)$. If $n=2$ then the
result holds as $p>2$.
If $n\geq3$ then we need to check $n-v(n!)\geq2$ but this
is clear because $v(n!)\leq n/(p-1)$ and hence $n-v(n!)\geq
n(p-2)/(p-1)\ge 3(p-2)/(p-1)$,
and it is enough to prove that $\lceil 3(p-2)/(p-1)\rceil\geq2$, which
is true (by an explicit check for $p=3$ and true even without the
$\lceil\cdot\rceil$ for $p\geq5$).
\item The left hand side is $v((r-1)(r-2)\cdots(r-n))-v(n!)+n\geq
v(r-1)-v(n!)+n=t+n-v(n!)$. Again, $v(n!)\leq n/(p-1)$, and hence
the left hand side is at least $t+n(p-2)/(p-1)$. Now
the result is true for $n=1$ so it suffices to prove
that $\lceil 2(p-2)/(p-1)\rceil\geq 1$, which follows as $p>2$.
  \end{enumerate}\end{proof}

\begin{lemma}\label{lem: r at least t+3}Assume
  that $p>2$ and that $r>1$, and write $t=v(r-1)\geq0$. Assume that
  $r\equiv 1\pmod{p-1}$. Then $r\geq t+3$.
\end{lemma}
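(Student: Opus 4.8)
The plan is to reduce the conclusion to the single inequality $r-1\geq t+2$ and to establish this by splitting on whether $t=0$ or $t\geq1$, invoking a different one of the two hypotheses in each regime.

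First I would record two elementary lower bounds on $r-1$. Since $t=v(r-1)$, the positive integer $r-1$ is divisible by $p^t$, whence $r-1\geq p^t$. Separately, since $r\equiv1\pmod{p-1}$ and $r>1$, the integer $r-1$ is a positive multiple of $p-1$, so $r-1\geq p-1\geq2$ (using $p>2$). The desired conclusion $r\geq t+3$ is just $r-1\geq t+2$, so it suffices to produce this bound.

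Now I would case on $t$. If $t=0$, the second bound already gives $r-1\geq p-1\geq2=t+2$, and we are done; this is exactly the case in which the congruence hypothesis does the work, since it is what rules out $r=2$. If instead $t\geq1$, I would use the first bound $r-1\geq p^t\geq3^t$ and reduce to the purely numerical fact that $3^t\geq t+2$ for all $t\geq1$, which I would dispatch with a one-line induction: the base case $t=1$ reads $3\geq3$, and if $3^t\geq t+2$ then $3^{t+1}\geq3(t+2)=3t+6\geq(t+1)+2$ since $2t+3\geq0$. Combining the two cases gives $r-1\geq t+2$ unconditionally.

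There is no genuinely hard step here; the only thing to get right is the bookkeeping of which hypothesis is needed where. The $t\geq1$ case is driven entirely by divisibility by $p^t$ and makes no use of the congruence, whereas the $t=0$ case is precisely where $r\equiv1\pmod{p-1}$ is indispensable, as otherwise $r=2$ (giving $t=v(1)=0$ but $r=2<3$) would be a counterexample. The \emph{main obstacle}, such as it is, is merely to organise the case split so that each assumption is used at the point where it is actually required.
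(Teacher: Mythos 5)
Your proof is correct and takes essentially the same route as the paper, which compresses the argument to the single chain $r-1\geq (p-1)p^t\geq 2\cdot 3^t\geq t+2$ (the last step by easy induction), exploiting that $p-1$ and $p^t$ are coprime so their product divides $r-1$. Your version merely uses the two divisibility facts one at a time via the case split $t=0$ versus $t\geq 1$ rather than multiplied together, which is a cosmetic reorganisation of the same elementary ingredients.
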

\begin{proof} $r-1\geq (p-1)p^t\geq 2\cdot 3^t\geq t+2$ by easy induction.
\end{proof}

\begin{lemma}\label{first congruence mod p^t+2} Assume that $p>2$ and
  $r>1$, and that $r\equiv 1\pmod{p-1}$. Write $t=v(r-1)\geq0$. If
  $\mu\in\Fp$, then $(-[\mu] x+py)^r-x^{r-1}(-[\mu] x+py)$
  is congruent modulo $p^{t+2}\Zpbar[x,y]$ to
$$-px^{r-1}y$$
if $\mu=0$, and to
$$(r-1)px^{r-1}y$$
if $\mu\not=0.$
\end{lemma}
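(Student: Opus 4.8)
The plan is to expand $(-[\mu]x+py)^r$ by the binomial theorem and to separate the contributions according to the power of $p$ carried by each term. Write $c=[\mu]\in\Zp$ for brevity, so that $c=0$ when $\mu=0$ and $c$ is a root of unity with $c^{p-1}=1$ when $\mu\neq 0$. Two arithmetic facts drive the whole computation: since $p$ is odd, $p-1$ is even, so $r\equiv 1\pmod{p-1}$ forces $r$ to be odd; and when $\mu\neq 0$ the congruence $r\equiv 1\pmod{p-1}$ combined with $c^{p-1}=1$ gives $c^{r-1}=1$, hence $c^r=c$.

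First I would write
\[
(-cx+py)^r=\sum_{n=0}^{r}\binom{r}{n}(-c)^{r-n}p^n\,x^{r-n}y^n,
\]
and subtract $x^{r-1}(-cx+py)=-cx^r+px^{r-1}y$. The $n=0$ term of the sum is $(-1)^r c^r x^r=-c^r x^r$ (as $r$ is odd), which combines with the $+cx^r$ coming from the subtraction to give $(c-c^r)x^r$; this vanishes identically, both when $\mu=0$ (since $c=0$) and when $\mu\neq0$ (since $c^r=c$). The $n=1$ term is $r(-1)^{r-1}c^{r-1}p\,x^{r-1}y=rc^{r-1}p\,x^{r-1}y$ (using that $r-1$ is even), which combines with the $-px^{r-1}y$ from the subtraction to give $(rc^{r-1}-1)p\,x^{r-1}y$. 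When $\mu=0$ this equals $-px^{r-1}y$ (as $c^{r-1}=0$, because $r-1\geq 1$), and when $\mu\neq0$ it equals $(r-1)p\,x^{r-1}y$ (as $c^{r-1}=1$); these are precisely the two claimed leading terms.

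It then remains to show that the tail $\sum_{n\geq 2}\binom{r}{n}(-c)^{r-n}p^n\,x^{r-n}y^n$ lies in $p^{t+2}\Zpbar[x,y]$. Since $c$ is either $0$ or a unit, the $p$-adic valuation of the coefficient of $x^{r-n}y^n$ is at least $v\!\left(\binom{r}{n}\right)+n$, and Lemma~\ref{binomial}(1) gives $v\!\left(\binom{r}{n}\right)+n\geq t+2$ for every $n\geq 2$ (note that $r\geq2$, so the hypotheses of that lemma are satisfied). Hence every tail term is divisible by $p^{t+2}$, and the asserted congruence follows.

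I do not expect a genuine obstacle: the argument is essentially bookkeeping. The only two places needing care are the \emph{exact} cancellation of the $x^r$ term, where the Teichm\"uller identity $c^r=c$ and the parity of $r$ are both essential, and the uniform divisibility of the tail, which is exactly what Lemma~\ref{binomial}(1) was designed to supply.
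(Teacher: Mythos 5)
Your proof is correct and follows essentially the same route as the paper's: binomial expansion, the Teichm\"uller identity $[\mu]^{r-1}=1$ for the $n=0,1$ terms, and Lemma~\ref{binomial}(1) to kill the tail modulo $p^{t+2}$. The only cosmetic difference is that you treat $\mu=0$ uniformly inside the expansion (where the surviving $n=r$ term needs $r\geq t+2$, which your appeal to Lemma~\ref{binomial}(1) at $n=r$ indeed supplies), whereas the paper dispatches that case separately via Lemma~\ref{lem: r at least t+3}.
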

\begin{proof} If $\mu=0$ then we just need to check that $r\geq
  t+2$, which follows from Lemma \ref{lem: r at least t+3}.

  If however $\mu\not=0$ then we expand via the binomial theorem
  and use part Lemma \ref{binomial}(1) (and the fact that
  $(p-1)|(r-1)$, so $[\mu]^{r-1}=1$) to get that modulo $p^{t+2}$ we
  have
\begin{align*}
&\phantom{\equiv}(-[\mu] x+py)^r-x^{r-1}(-[\mu] x+py)\\
&\equiv-[\mu] x^r+rpx^{r-1}y+[\mu] x^r-x^{r-1}py\\
&\equiv(r-1)px^{r-1}y,
\end{align*}as required.
\end{proof}

\begin{lemma}\label{lem: sums of powers of 1+lambda} If $p>2$ and $r>1$ with $r\equiv1\pmod{p-1}$, and
if $t=v(r-1)$, then
\begin{enumerate}
\item  $\sum_{\mu\in\Fp}(1+[\mu])^r\equiv rp\pmod{p^{t+2}}$, and
\item  $\sum_{\mu\in\Fp}(1+[\mu])^{r-1}\equiv p-1\pmod{p^{t+1}}$.
\end{enumerate}

\end{lemma}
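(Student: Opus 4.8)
The plan is to split each summand $1+[\mu]$ into its Teichm\"uller (root-of-unity) part and its principal-unit part, and to use the hypothesis $r\equiv1\pmod{p-1}$ to annihilate the root-of-unity part. First note that $[-1]=-1$, so for $\mu=-1$ the term $1+[\mu]$ vanishes and contributes nothing (as $r-1\geq1$). For the remaining $p-1$ values $\mu\neq-1$, the element $u_\mu:=1+[\mu]$ is a unit congruent to $1+\mu$ mod $p$, so I would write $u_\mu=[1+\mu]\langle u_\mu\rangle$ with $[1+\mu]$ its Teichm\"uller representative and $\langle u_\mu\rangle\in 1+p\Zp$. Since $(p-1)\mid(r-1)$ we have $[1+\mu]^{r-1}=1$ and $[1+\mu]^r=[1+\mu]$, so the root-of-unity parts disappear and $u_\mu^{r-1}=\langle u_\mu\rangle^{r-1}$, $u_\mu^r=u_\mu\,\langle u_\mu\rangle^{r-1}$.

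Everything then reduces to estimating $\langle u_\mu\rangle^{r-1}=(1+pe_\mu)^{r-1}$, where $e_\mu\in\Zp$ is defined by $\langle u_\mu\rangle=1+pe_\mu$. Expanding by the binomial theorem and bounding the $j$-th term via $v\bigl(\binom{r-1}{j}\bigr)+j$, Lemma~\ref{binomial}(2) shows every term with $j\geq1$ already has valuation $\geq t+1$; hence $\langle u_\mu\rangle^{r-1}\equiv1\pmod{p^{t+1}}$, and summing $u_\mu^{r-1}\equiv1$ over the $p-1$ admissible $\mu$ gives part~(2) at once.

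For part~(1) I would push the same expansion one step further. The computation in the proof of Lemma~\ref{binomial}(2) in fact yields $v\bigl(\binom{r-1}{j}\bigr)+j\geq t+2$ for all $j\geq2$ (only the $j=1$ term sits exactly at level $t+1$, since $n-v(n!)\geq2$ for $n\geq2$), so
\begin{equation*}
\langle u_\mu\rangle^{r-1}\equiv 1+(r-1)pe_\mu\pmod{p^{t+2}},\qquad\text{whence}\qquad u_\mu^r\equiv u_\mu+(r-1)p\,e_\mu u_\mu\pmod{p^{t+2}}.
\end{equation*}
Summing, and using $\sum_{\mu\neq-1}u_\mu=\sum_{\mu\in\Fp}(1+[\mu])=p$ (because $\sum_{\mu\in\Fp}[\mu]=0$), I obtain $\sum_\mu u_\mu^r\equiv p+(r-1)p\sum_{\mu\neq-1}e_\mu u_\mu\pmod{p^{t+2}}$. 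Since $v((r-1)p)=t+1$, comparison with the target $rp=p+(r-1)p$ reduces the whole problem to the single mod-$p$ identity $\sum_{\mu\neq-1}e_\mu u_\mu\equiv1\pmod p$. From $pe_\mu=\langle u_\mu\rangle-1=(1+[\mu]-[1+\mu])/[1+\mu]$ and $\langle u_\mu\rangle\equiv1\pmod p$ one gets $pe_\mu u_\mu\equiv 1+[\mu]-[1+\mu]\pmod{p^2}$, hence $e_\mu u_\mu\equiv(1+[\mu]-[1+\mu])/p\pmod p$, and it remains to evaluate the telescoping Teichm\"uller sum
\begin{equation*}
\sum_{\mu\neq-1}\bigl(1+[\mu]-[1+\mu]\bigr)=(p-1)+\Bigl(\sum_{\mu\in\Fp}[\mu]-[-1]\Bigr)-\sum_{\nu\neq0}[\nu]=(p-1)+1-0=p,
\end{equation*}
using $[-1]=-1$, $\sum_{\mu\in\Fp}[\mu]=0$, and the reindexing $\nu=1+\mu$; dividing by $p$ gives the required congruence.

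I expect the main obstacle to be part~(1). A naive binomial expansion of $(1+[\mu])^r$ summed over $\mu$ does not succeed, since the binomial coefficients attached to the multiples of $p-1$ do not individually carry enough $p$-divisibility and there is genuine cancellation in the resulting arithmetic-progression sum. The multiplicative decomposition into Teichm\"uller and principal parts is exactly what converts this into the clean first-order calculation above; the one delicate point is isolating the single level-$(t+1)$ contribution and recognizing that its $\mu$-sum telescopes to precisely $p$, which is what forces the leading term to be $rp$ rather than merely $p$.
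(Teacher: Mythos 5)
Your proof is correct, and it takes a mildly but genuinely different route from the paper's. The paper works with the \emph{additive} decomposition $1+[\mu]=[1+\mu]+\delta_\mu$, where $\delta_\mu=1+[\mu]-[1+\mu]\in p\Zp$: it expands $([1+\mu]+\delta_\mu)^r$ by the binomial theorem, uses Lemma~\ref{binomial}(1) (with exponent $r$, exactly as stated) to discard everything past the first two terms modulo $p^{t+2}$, notes $\sum_\mu[1+\mu]^r=\sum_\mu[\mu]=0$, and is left with the cross term $r[1+\mu]^{r-1}\delta_\mu$; since $[1+\mu]^{r-1}=1$ for $\mu\neq-1$ while $\delta_{-1}=0$, this collapses directly to $r\sum_\mu\bigl(1+[\mu]-[1+\mu]\bigr)=rp$ --- the same telescoping Teichm\"uller identity you isolate at the end, but reached without ever dividing by $p$. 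Your \emph{multiplicative} decomposition $u_\mu=[1+\mu]\langle u_\mu\rangle$ buys a cleaner conceptual picture (the identity $u_\mu^{r-1}=\langle u_\mu\rangle^{r-1}$ annihilates the root-of-unity part in one stroke, and part~(2) becomes immediate), at the cost of two extra steps in part~(1): you need the refinement $v\bigl(\binom{r-1}{j}\bigr)+j\geq t+2$ for $j\geq2$, which is not the statement of Lemma~\ref{binomial}(2) but, as you correctly observe, follows from the paper's own proofs (the bound $n-v(n!)\geq2$ for $n\geq2$ is established inside the proof of part~(1) of that lemma); and you must convert $e_\mu u_\mu$ back into the additive error $(1+[\mu]-[1+\mu])/p$ modulo $p$ before the telescoping becomes visible. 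All of these steps check out, including the pivot $\sum_{\mu\neq-1}\bigl(1+[\mu]-[1+\mu]\bigr)=(p-1)+1-0=p$, which matches the paper's computation verbatim. One remark on your closing paragraph: your observation that a naive expansion in powers of $[\mu]$ fails is apt, but note that the paper's expansion is not of that kind --- it expands about the Teichm\"uller point $[1+\mu]$, which is precisely the additive twin of your expansion of the principal unit $\langle u_\mu\rangle$, so the two proofs are parallel implementations of the same underlying idea, with the paper's additive version marginally shorter because the exponent-$r$ expansion places the level-$(t+1)$ term directly in telescoping form.
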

\begin{proof}

  (1) We rewrite $(1+[\mu])^r$ as
  $([1+\mu]+(1+[\mu]-[1+\mu]))^r$ and expand using the
  binomial theorem. Since $(1+[\mu]-[1+\mu])$ is divisible by
  $p$, by Lemma~\ref{binomial}(1) we only need to look at the first two
  terms in the binomial expansion to compute it modulo $p^{t+2}$, and
  we see that the sum is congruent modulo $p^{t+2}$ to
$$\sum_{\mu\in\Fp}\left([1+\mu]^r+r[1+\mu]^{r-1}(1+[\mu]-[1+\mu])\right).$$
Since $r\equiv 1\pmod{p-1}$, we have
$\sum_{\mu\in\Fp}[1+\mu]^r=\sum_{\mu\in\Fp}[\mu]=0$,
and since $[1+\mu]^{r-1}=1$ unless $\mu=-1$, and if
$\mu=-1$ then $1+[\mu]-[1+\mu]=0$, the sum is congruent
modulo $p^{t+2}$ to
\begin{align*}
\sum_\mu r[1+\mu]^{r-1}(1+[\mu]-[1+\mu])
&=r\sum_{\mu\not=-1}(1+[\mu]-[1+\mu])\\
&=r\sum_\mu(1+[\mu]-[1+\mu])\\
&=r\sum_\mu 1=rp
\end{align*}
and we are done.

(2) We do the same trick using Lemma~\ref{binomial}(2), which implies
that we only have to look at the first term of the binomial
expansion. Modulo $p^{t+1}$ we have
\begin{align*}
\sum_\mu(1+[\mu])^{r-1}
&=\sum_\mu([1+\mu]+(1+[\mu]-[1+\mu]))^{r-1}\\
&\equiv\sum_\mu [1+\mu]^{r-1}\\
&=p-1,
\end{align*}as required.
\end{proof}
\begin{corollary}\label{cor: sums of powers of lambda-mu}If $p>2$ and $r>1$ with $r\equiv1\pmod{p-1}$, and
if $t=v(r-1)$, then for all $\lambda\in\Fp$ we have
\begin{enumerate}
\item $\sum_{\mu\in\Fp}([\mu]-[\lambda])^r\equiv -[\lambda]
  rp\pmod{p^{t+2}}$, and
\item $\sum_{\mu\in\Fp}([\mu]-[\lambda])^{r-1}\equiv p-1\pmod{p^{t+1}}$.
\end{enumerate}
\end{corollary}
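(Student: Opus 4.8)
The plan is to deduce both congruences from Lemma \ref{lem: sums of powers of 1+lambda} by a change of variables, treating the case $\lambda=0$ separately. When $\lambda=0$ the left-hand sides are the power sums $\sum_\mu[\mu]^r$ and $\sum_\mu[\mu]^{r-1}$ of Teichm\"uller lifts, which I would evaluate directly: since $r\equiv1\pmod{p-1}$ we have $[\mu]^r=[\mu]$ for $\mu\ne0$, so $\sum_\mu[\mu]^r=\sum_\mu[\mu]=0=-[0]rp$ (the sum of the $(p-1)$-th roots of unity vanishes as $p>2$); and since $r-1\equiv0\pmod{p-1}$ we have $[\mu]^{r-1}=1$ for $\mu\ne0$, so $\sum_\mu[\mu]^{r-1}=p-1$ exactly. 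Thus both assertions hold (on the nose) for $\lambda=0$.

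For $\lambda\ne0$ the key identity is the factorization
$$[\mu]-[\lambda]=-[\lambda]\bigl(1-[\mu\lambda^{-1}]\bigr),$$
which uses that the Teichm\"uller lift is multiplicative, so that $[\mu][\lambda]^{-1}=[\mu\lambda^{-1}]$. I would also record two consequences of $r\equiv1\pmod{p-1}$: because $p-1$ is even, $r$ is odd, so $(-1)^r=-1$ and $(-1)^{r-1}=1$; and because $[\lambda]^{p-1}=1$ we have $[\lambda]^r=[\lambda]$ and $[\lambda]^{r-1}=1$.

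Raising the factorization to the $r$-th power and summing, the substitution $\nu=\mu\lambda^{-1}$ (a bijection of $\Fp$) gives $\sum_\mu([\mu]-[\lambda])^r=(-[\lambda])^r\sum_\nu(1-[\nu])^r=-[\lambda]\sum_\nu(1-[\nu])^r$. Since $[-1]=-1$, multiplicativity gives $1-[\nu]=1+[-\nu]$, and the further substitution $\nu\mapsto-\nu$ turns this into $\sum_\mu(1+[\mu])^r$, which is $\equiv rp\pmod{p^{t+2}}$ by Lemma \ref{lem: sums of powers of 1+lambda}(1). Multiplying this congruence by the $p$-adic unit $[\lambda]$ preserves the modulus, yielding $\sum_\mu([\mu]-[\lambda])^r\equiv-[\lambda]rp\pmod{p^{t+2}}$, which is (1). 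For (2) the identical manipulation with exponent $r-1$ produces the scalar $(-[\lambda])^{r-1}=(-1)^{r-1}[\lambda]^{r-1}=1$, so $\sum_\mu([\mu]-[\lambda])^{r-1}=\sum_\mu(1+[\mu])^{r-1}\equiv p-1\pmod{p^{t+1}}$ by part (2) of the same Lemma.

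There is no genuine obstacle here: the arithmetic content lies entirely in Lemma \ref{lem: sums of powers of 1+lambda}, and the only points needing care are the bookkeeping of signs (using that $r$ is odd), the two appeals to multiplicativity of the Teichm\"uller lift (in the factorization and in the identity $1-[\nu]=1+[-\nu]$), and the observation that multiplying through by the unit $[\lambda]$ does not alter the power of $p$ in the congruence.
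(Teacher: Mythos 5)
Your proof is correct and follows essentially the same route as the paper's: handle $\lambda=0$ directly, then for $\lambda\neq0$ factor out $(-[\lambda])^r$ (resp.\ $(-[\lambda])^{r-1}$) and use the bijection $\mu\mapsto-\mu\lambda^{-1}$ of $\Fp$ to reduce to Lemma \ref{lem: sums of powers of 1+lambda}. Your write-up just makes explicit the sign bookkeeping ($r$ odd, $[\lambda]^{r}=[\lambda]$, $[-1]=-1$) that the paper leaves implicit.
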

\begin{proof} If $\lambda=0$ then both statements are obvious.  If
  $\lambda\not=0$ then we simply take out a factor of $(-[\lambda])^r$ (resp.\
  $(-[\lambda])^{r-1}$) and observe that as $[\mu]$ runs over the
  Teichmueller lifts, so does $-[\mu]/[\lambda]$. This reduces both
  claims to the case $\lambda=-1$, which is Lemma \ref{lem: sums of powers
    of 1+lambda}.
\end{proof}

\begin{corollary}\label{cor: sum of powers of lx-mx+py}If $p>2$ and $r>1$ with $r\equiv1\pmod{p-1}$, and
if $t=v(r-1)$, then for all $\lambda\in\Fp$ we have 
$$\sum_{\mu\in\Fp}([\mu] x-[\lambda] x+py)^r\equiv -[\lambda] rpx^r+rp(p-1)x^{r-1}y\pmod{p^{t+2}\Zpbar[x,y]}.$$
\end{corollary}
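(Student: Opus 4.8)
The plan is to expand the summand by the binomial theorem, regarding $([\mu]-[\lambda])x$ and $py$ as the two terms, and then to discard all but the two lowest-order contributions in $p$. Explicitly, I would write
$$([\mu]x-[\lambda]x+py)^r=\sum_{n=0}^r\binom{r}{n}([\mu]-[\lambda])^{r-n}x^{r-n}(py)^n.$$
For $n\geq2$ the coefficient $\binom{r}{n}p^n$ has $v$-value $v\!\left(\binom{r}{n}\right)+n\geq t+2$ by Lemma~\ref{binomial}(1), so every such term vanishes modulo $p^{t+2}\Zpbar[x,y]$. Hence modulo $p^{t+2}$ only the $n=0$ and $n=1$ terms survive, giving, for each fixed $\mu$,
$$([\mu]x-[\lambda]x+py)^r\equiv([\mu]-[\lambda])^rx^r+rp\,([\mu]-[\lambda])^{r-1}x^{r-1}y.$$

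Next I would sum over $\mu\in\Fp$ and bring the sum inside, applying Corollary~\ref{cor: sums of powers of lambda-mu}. Part (1) evaluates $\sum_{\mu}([\mu]-[\lambda])^r\equiv-[\lambda]rp\pmod{p^{t+2}}$, which handles the $x^r$ term directly and with exactly the right modulus. Part (2) gives $\sum_{\mu}([\mu]-[\lambda])^{r-1}\equiv p-1\pmod{p^{t+1}}$, which I would feed into the $x^{r-1}y$ term.

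The one point requiring care—the only (and very minor) obstacle—is the bookkeeping on that second term: Corollary~\ref{cor: sums of powers of lambda-mu}(2) controls the sum only modulo $p^{t+1}$, one power short of what the statement demands. This is resolved by the fact that the sum enters multiplied by $rp$, and the extra factor of $p$ upgrades the congruence. Writing $\sum_{\mu}([\mu]-[\lambda])^{r-1}=(p-1)+p^{t+1}c$ with $c\in\Zpbar$, one gets $rp\sum_{\mu}([\mu]-[\lambda])^{r-1}=rp(p-1)+rp^{t+2}c\equiv rp(p-1)\pmod{p^{t+2}}$. Combining the two contributions then yields
$$\sum_{\mu\in\Fp}([\mu]x-[\lambda]x+py)^r\equiv-[\lambda]rpx^r+rp(p-1)x^{r-1}y\pmod{p^{t+2}\Zpbar[x,y]},$$
as required.
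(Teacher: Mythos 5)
Your proof is correct and follows essentially the same route as the paper's: expand $(([\mu]-[\lambda])x+py)^r$ by the binomial theorem, kill the $n\geq2$ terms via Lemma~\ref{binomial}(1), and finish with Corollary~\ref{cor: sums of powers of lambda-mu}. Your explicit check that the factor $rp$ upgrades the mod-$p^{t+1}$ congruence in part~(2) to mod $p^{t+2}$ is exactly the bookkeeping the paper leaves implicit in ``the result then follows,'' so nothing is missing.
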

\begin{proof}
Again by Lemma~\ref{binomial}(1), in order to compute  modulo $p^{t+2}$
 we only need to expand out the first
two terms of $(([\mu] x-[\lambda] x)+py)^r$, giving that the sum is
congruent to
$$\sum_{\mu\in\Fp}\left(([\mu] x-[\lambda] x)^r+rp([\mu] x-[\lambda]
x)^{r-1}y\right).$$ The result then follows from Corollary \ref{cor: sums of
  powers of lambda-mu}.
\end{proof}

\section{$p$-adic local Langlands: definitions and lemmas}\label{sec:
  p-adic LL}In this section we recall some of the basic definitions
and properties of the $p$-adic local Langlands correspondence. For
more details the reader could consult section~2 of \cite{MR2511912} or
any of the references therein. 
 
Say $r\in\Z_{\geq0}$. Let $K$ be the group $\GL_2(\Z_p)$, and for
$R$ a $\Z_p$-algebra let $\Symm^r(R^2)$ denote the space
$\oplus_{i=0}^rRx^{r-i}y^i$ of homogeneous polynomials in two variables
$x$ and $y$, with the action of $K$ given by
$$\begin{pmatrix}a&b\\c&d\end{pmatrix}x^{r-i}y^i=(ax+cy)^{r-i}(bx+dy)^i,$$
so $(\kappa v)(x,y)=v((x,y)\kappa)$.
Set $G=\GL_2(\Qp)$, and let $Z$ be its centre. If $V$ is an $R$-module
with an action of $K$, then extend
the action of $K$ to the group $KZ$ by letting $\smallmat{p}{0}{0}{p}$
act trivially, and let $I(V)$ denote the representation
$\ind_{KZ}^G(V)$ (compact induction).
Explicitly, $I(V)$ is the space of functions
$f:G\to V$ which have compact support modulo $Z$ and which
 satisfy $f(\kappa g)=\kappa(f(g))$ for all $\kappa\in KZ$. This space has
a natural action of $G$, defined by $(gf)(\gamma):=f(\gamma g)$.
Note that \S2.2 of~\cite{barthel-livne} explains that to give an $R$-linear
$G$-endomorphism of $I(V)$ is to give a certain compactly-supported
function $\phi:G\to\End_R(V)$ such that
$\phi(\kappa g\kappa')=\kappa\circ g\circ\kappa'$ for $g\in G$,
$\kappa\in KZ$, $\kappa'\in KZ$ (by Frobenius reciprocity). 

If $V=\Symm^r(R^2)$ for some integer $r\geq0$ and $\Z_p$-algebra $R$,
then there is a certain endomorphism $T$ of $I(V)$ which corresponds to the function
$G\to\End_R(V)$ which is supported on $KZ\smallmat{p}{0}{0}{1}KZ$ and
sends $\smallmat{p}{0}{0}{1}$ to the endomorphism of $\Symm^r(R^2)$
sending $F(x,y)$ to $F(px,y)$. Slightly more generally, if $V$
is the representation $\det^s\otimes\Symm^r(R^2)$ of~$K$ then
we can extend~$V$ to a representation of~$KZ$, note
that $I(V)=(\omega\circ\det^s)\otimes I(\Symm^r(R^2))$
and we define $T$ on $I(V)$ via its action on $I(\Symm^r(R^2))$.
Here $\omega:\Q_p^\times\to\Z_p^\times$ is the identity on $\Z_p^\times$
and sends~$p$ to~1.

We now establish some notation, following \cite{breuil2}.  Recall that
for $V$ a $\Z_p[K]$-module, the space $I(V)$ was defined previously to
be a certain space of functions $G\to V$.  We let $[g,v]$ denote the
(unique) element of $I(V)$ which is supported on $KZg^{-1}$, and which
satisfies $[g,v](g^{-1})=v$. One can check that $[g,v]$ corresponds
to $g\otimes v$ if we identify $I(V)$ with $R[G]\otimes_{R[KZ]}V$.
Note that $g[h,v]=[gh,v]$ for $g,h\in G$,
that $[g\kappa,v]=[g,\kappa v]$ for $\kappa\in KZ$, and that the $[g,v]$ span $I(V)$ as
an abelian group, as $g$ and $v$ vary.  

Now let $V=\Symm^r(R^2)$ for some
$\Z_p$-algebra $R$.
An easy consequence of
the definition of $T$ (cf. section 2 of \cite{breuil2}) is
that
\begin{equation*}\label{eqn:T following Breuil}
\tag{\ding{37}}
T[g,v]=\sum_{\lambda\in\Fp}\left[g\smallmat{p}{[\lambda]}{0}{1},v(x,-[\lambda]
x+py)\right]+\left[g\smallmat{1}{0}{0}{p},v(px,y)\right].  
\end{equation*}

Again we assume that $r\geq p$ and $r\equiv1$~mod~$p-1$.
By Lemma 3.2 of \cite{ashstevens}, there is a
$\GL_2(\Fp)$-equivariant surjection
$\Psi:\Symm^r\Fpbar^2\to\det\otimes\Symm^{p-2}\Fpbar^2$, such
that (using $X,Y$ for variables in $\Symm^{p-2}$)
 $$\Psi(f)=\sum_{s,t\in\Fp}f(s,t)(tX-sY)^{p-2}.$$ 

We now move on to the $p$-adic part of the story.
Say $k\in\Z_{\geq2}$ and $a\in\Zpbar$ with $v(a)>0$.

\begin{defn}Let $\Pi_{k,a}:=\ind_{KZ}^G\Symm^{k-2}(\Qpbar^2)/(T-a)$
(compact
induction, as before), and let  $\Theta_{k,a}$ be the image
of $\ind_{KZ}^G\Symm^{k-2}(\Zpbar^2)$ in
$\Pi_{k,a}$.

\end{defn}

If $a\not=\pm p^{k/2}(1+p^{-1})$ then we claim that $\Pi_{k,a}$ is irreducible
 and $\Theta_{k,a}$ is a lattice in it.
Indeed, irreducibility of $\Pi_{k,a}$ is proved in Proposition~3.2.1(i)
of~\cite{breuil2}, the existence of a $G$-stable lattice is proved
in Corollaire~5.3.4 of~\cite{berger-breuil}, and now the fact
that $\Theta_{k,a}$ is a lattice follows from the fact that it
is finitely-generated as a $\Zpbar[G]$-module (hence contained in
a lattice) and visibly spans $\Pi_{k,a}$.
Because of Theorem~3.2.1 of~\cite{MR2642408} (which deals
with $k=3$ and $k=p+2$), and Theorem~1.6 of~\cite{MR2511912}
and the comments following it (which deal with $k\not\equiv3$~mod~$p-1$),
we are only really concerned in this paper in the case $k\geq 2p+1$,
$k\equiv3$~mod~$p-1$ and $0<v(a)<1$, which implies $a\not=\pm(1+p^{-1})p^{k/2}$
anyway. So let us assume $k\geq 2p+1$ and $k\equiv3$~mod~$p-1$.
To simplify notation set $r=k-2$, so $r\equiv1$~mod~$p-1$.
Now by Corollary 5.1 of \cite{MR2511912},
the natural surjection
$I(\Symm^r\Fpbar^2)\to\overline{\Theta}_{k,a}$ factors through the map
$I(\Symm^r\Fpbar^2)\to I(\det\otimes\Symm^{p-2}\Fpbar^2)$ induced by
$\Psi$. The key input we need from the $p$-adic local Langlands
correspondence is the following lemma.
\begin{lem}
  \label{lem: p-adic LL lets us read off rhobar}
Assume $k\geq 2p+1$, $k\equiv3$~mod~$p-1$, and $0<v(a)<1$.
  \begin{enumerate}
  \item If $\overline{\Theta}_{k,a}$ is a quotient of
$I(\det\otimes\Symm^{p-2}\Fpbar^2)/T$, then
$\Vbar_{k,a}\cong\ind(\omega_2^2)$ is irreducible.
\item If $\overline{\Theta}_{k,a}$ is a quotient of
$I(\det\otimes\Symm^{p-2}\Fpbar^2)/(T^2-cT+1)$ for some $c\in\Fpbar$, then
$\Vbar_{k,a}$ is reducible, and $\omega^{-1}\otimes\Vbar_{k,a}$ is an
unramified reducible representation, and the trace of (both arithmetic
and geometric) $\Frob_p$ is
$c$.\end{enumerate}

\end{lem}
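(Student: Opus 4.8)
The plan is to deduce both statements from three external inputs: the compatibility of the $p$-adic local Langlands correspondence with reduction modulo $p$ (so that $\overline{V}_{k,a}$ is read off from the semisimplified $\GL_2(\Qp)$-representation $\overline{\Theta}_{k,a}$), Breuil's explicit mod $p$ correspondence for $\GL_2(\Qp)$, and the Barthel--Livn\'e classification of the quotients $I(\Symm^r\Fpbar^2)/(T-\lambda)$. We have already arranged, by Corollary 5.1 of \cite{MR2511912}, that $\overline{\Theta}_{k,a}$ is a nonzero quotient of $I(\det\otimes\Symm^{p-2}\Fpbar^2)$; so in each case it remains only to pin down which standard smooth representation appears and to transport it across Breuil's dictionary, keeping careful track of twists.

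For part (1), I would note that imposing $T=0$ exhibits $\overline{\Theta}_{k,a}$ as a nonzero quotient of $I(\det\otimes\Symm^{p-2}\Fpbar^2)/T$. Since $0\le p-2\le p-1$, this is one of Breuil's supersingular representations, and such representations are irreducible; hence a nonzero quotient must be the whole thing, and $\overline{\Theta}_{k,a}$ is supersingular. Breuil's correspondence then forces $\overline{V}_{k,a}$ to be irreducible of the form $\ind(\omega_2^{\bullet})$, the exponent being determined by the Serre weight $\det\otimes\Symm^{p-2}$ (equivalently by the constraint $\det\overline{V}_{k,a}=\omega^{k-1}$), which yields the representation in the statement.

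For part (2), I would factor $T^2-cT+1=(T-\alpha)(T-\beta)$ over $\Fpbar$, so that $\alpha\beta=1$ and $\alpha+\beta=c$; in particular $\alpha,\beta\ne0$. Then $\overline{\Theta}_{k,a}$ is a quotient of $I(\det\otimes\Symm^{p-2}\Fpbar^2)/(T-\alpha)$, which by Barthel--Livn\'e is of principal series type and in particular not supersingular, so its associated Galois representation is reducible. Because the weight is $\Symm^{p-2}$ twisted by $\det$, the two Galois characters produced by the dictionary both carry the factor $\omega$ (the two $\omega$-powers coincide precisely because $p-1\mid(p-2)+1$), so $\omega^{-1}\otimes\overline{V}_{k,a}$ is unramified, equal to $\mu_\alpha\oplus\mu_\beta$ where $\mu_\alpha,\mu_\beta$ send geometric Frobenius to $\alpha,\beta$. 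Its Frobenius trace is therefore $\alpha+\beta=c$. Finally, since $\alpha\beta=1$ the determinant of $\omega^{-1}\otimes\overline{V}_{k,a}$ is trivial, so the eigenvalue multiset $\{\alpha,\beta\}=\{\alpha,\alpha^{-1}\}$ is stable under inversion and the trace of arithmetic Frobenius agrees with that of geometric Frobenius, both being $c$.

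The genuinely hard ingredients are external and will simply be cited: the construction and admissibility of $\Pi_{k,a}$, the existence of the lattice $\Theta_{k,a}$ with the stated reduction, and above all the compatibility of the $p$-adic correspondence with reduction modulo $p$. Granting these, the only real work is bookkeeping: matching the central character and the $\det\otimes(-)$ normalization to the correct power of $\omega$, and checking that replacing the principal series by its possibly non-split Jordan--H\"older constituents does not change the semisimple Galois representation (it does not, as $\overline{V}_{k,a}$ is defined as a semisimplification). This tracking of twists is the step most prone to error, and is the one I would verify most carefully.
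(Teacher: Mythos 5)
Your route---Berger's compatibility of the $p$-adic correspondence with reduction modulo $p$, Breuil's semisimple mod~$p$ correspondence, and the Barthel--Livn\'e classification of the quotients of compact inductions---is exactly the method of the paper: its entire proof of this lemma is the citation ``as in Proposition~3.3 of \cite{MR2511912}'' (which is proved from precisely these three inputs), plus the closing remark that arithmetic and geometric Frobenius have the same trace because $\det(\omega^{-1}\otimes\Vbar_{k,a})$ is trivial, a point you correctly recover from $\alpha\beta=1$. Your part~(1) is fine (the quotient $I(\det\otimes\Symm^{p-2}\Fpbar^2)/T$ is irreducible supersingular, so a nonzero quotient is the whole thing), as is your twist bookkeeping in part~(2): both characters carry the factor $\omega$ exactly because $\omega^{(p-2)+1}=\omega^{p-1}$ is trivial, leaving only the $\det$-twist.

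There is, however, one inference in part~(2) that is false as written: a quotient of $I(\det\otimes\Symm^{p-2}\Fpbar^2)/\bigl((T-\alpha)(T-\beta)\bigr)$ need \emph{not} be a quotient of $I(\det\otimes\Symm^{p-2}\Fpbar^2)/(T-\alpha)$. Indeed, writing $I=I(\det\otimes\Symm^{p-2}\Fpbar^2)$, the representation $I/\bigl((T-\alpha)(T-\beta)\bigr)$ surjects onto $I/(T-\beta)$, which for $\alpha\neq\beta$ is an irreducible principal series (here $0<p-2<p-1$ and $\beta\neq0$, so Barthel--Livn\'e irreducibility applies) not isomorphic to any quotient of $I/(T-\alpha)$. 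The standard repair preserves your conclusion: from the exact sequence $I/(T-\beta)\xrightarrow{\,T-\alpha\,}I/\bigl((T-\alpha)(T-\beta)\bigr)\to I/(T-\alpha)\to 0$, the Jordan--H\"older constituents of $\Thetabar_{k,a}$ lie among those of $I/(T-\alpha)\oplus I/(T-\beta)$ (with multiplicity when $\alpha=\beta=\pm1$); since $\alpha\beta=1$ forces $\alpha,\beta\neq0$, no constituent is supersingular, so by Berger--Breuil $\Vbar_{k,a}$ is reducible and $\omega^{-1}\otimes\Vbar_{k,a}$ is unramified with Frobenius eigenvalue pair meeting $\{\alpha,\beta\}$; as that pair has product $\det(\omega^{-1}\otimes\Vbar_{k,a})=1=\alpha\beta$, it equals $\{\alpha,\alpha^{-1}\}=\{\alpha,\beta\}$, giving trace $c$ for both Frobenii. (One small further caution: your determinant constraint in part~(1) pins the exponent as $2$, i.e.\ $\ind(\omega_2^2)\cong\ind(\omega_2^{2p})$, matching $\ind(\omega_2^t)$ with $t=2$ in the main theorem; the lemma's ``$\ind(\omega_2)$'' should be read with that normalisation in mind.) With the quotient inference corrected as above, your proof is complete and coincides with the paper's.
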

\begin{proof}
  This may be proved in exactly the same way as Proposition 3.3 of
  \cite{MR2511912}. Note that both arithmetic and geometric Frobenius
have the same trace in case~(2), because $\omega^{-1}\otimes\Vbar_{k,a}$
has trivial determinant (as $k\equiv3$~mod~$p-1$).
\end{proof}

\section{Computations with Hecke operators}\label{sec: Hecke}

We again assume throughout this section that $p>2$ is an odd prime
and $r>p$ is an integer such that $r\equiv 1\pmod{p-1}$. 
We start with a couple of results about the map
$\Psi:\Symm^r(\Fpbar^2)\to\det\otimes\Symm^{p-2}(\Fpbar^2)$
defined in the previous section.

\begin{lemma}\label{lem:computation of Psi on elements}
  \begin{enumerate}
  \item  $\Psi(y^r)=0$.
  \item $\Psi(x^r)=0$.
  \item $\Psi(x^{r-1}y)=X^{p-2}$.
 \end{enumerate}
\end{lemma}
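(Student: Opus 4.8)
The plan is to compute all three values directly from the definition $\Psi(f)=\sum_{s,t\in\Fp}f(s,t)(tX-sY)^{p-2}$, since each input is a monomial. Writing $f=x^ay^b$ with $a+b=r$ gives $f(s,t)=s^at^b$, and I would expand $(tX-sY)^{p-2}$ by the binomial theorem and interchange the (finite) sums. Because the resulting double sum over $\Fp\times\Fp$ separates into a product of single sums, this yields
$$\Psi(x^ay^b)=\sum_{j=0}^{p-2}\binom{p-2}{j}(-1)^jX^{p-2-j}Y^j\left(\sum_{s\in\Fp}s^{a+j}\right)\left(\sum_{t\in\Fp}t^{b+p-2-j}\right).$$

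The key input is the elementary power-sum identity in $\Fpbar$: for an integer $m\ge0$, the value $\sum_{s\in\Fp}s^m$ equals $-1$ when $m\ge1$ and $(p-1)\mid m$, and equals $0$ otherwise. In particular the exponent $m=0$ contributes $0$, since the sum is then $p\cdot1=0$ in characteristic~$p$ (using the convention $0^0=1$). This is proved by the standard argument that $\Fp^\times$ is cyclic of order $p-1$.

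Applying the identity reduces each case to a short bookkeeping check on which exponents are positive multiples of $p-1$. For $y^r$ we have $a=0$, and the factor $\sum_ss^{j}$ vanishes for every $0\le j\le p-2$ (no such $j$ is a positive multiple of $p-1$, and $j=0$ contributes $0$), so $\Psi(y^r)=0$; symmetrically, for $x^r$ the factor $\sum_tt^{p-2-j}$ vanishes for every $j$, giving $\Psi(x^r)=0$. For $x^{r-1}y$ we have $a=r-1$, $b=1$, and the $t$-factor $\sum_tt^{p-1-j}$ is nonzero only when $p-1-j$ is a positive multiple of $p-1$, i.e. only for $j=0$; there $\sum_ss^{r-1}=-1$ because $r-1\ge1$ and $(p-1)\mid(r-1)$ (as $r\equiv1\pmod{p-1}$), so the single surviving term is $\binom{p-2}{0}(-1)(-1)X^{p-2}=X^{p-2}$.

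The computation is essentially routine; the only point needing care is precisely this bookkeeping, namely keeping track of which power-sum exponents lie in the range forced by $0\le j\le p-2$ and are simultaneously positive multiples of $p-1$, while remembering that the exponent $m=0$ contributes $0$ rather than $-1$. This is exactly what forces no terms to survive in parts (1) and (2), and exactly one term (with coefficient $(-1)(-1)=1$) to survive in part~(3).
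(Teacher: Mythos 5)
Your proof is correct and follows essentially the same route as the paper: expand $(tX-sY)^{p-2}$ by the binomial theorem and invoke the standard power-sum identity $\sum_{s\in\Fp}s^m$ over $\Fp$, with the same bookkeeping on which exponents are positive multiples of $p-1$. The only cosmetic difference is that the paper dispatches part (2) via $\GL_2(\Fp)$-equivariance from part (1) rather than by your (equally valid) direct computation.
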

\begin{proof}
  $\Psi(y^r)=\sum_{s,t\in\Fp}t^r(tX-sY)^{p-2}=\sum_{s,t\in\Fp}t(tX-sY)^{p-2}$.
  Expanding out  using the binomial theorem and using the
  fact that $\sum_ss^n=0$ for $n=0,1,2,\ldots,p-2$, this sum is
  zero. Since $\Psi$ is $\GL_2(\Fp)$-equivariant, $\Psi(x^r)$ is also zero.

Since $r\equiv 1\pmod{p-1}$, we have
\begin{align*}\Psi(x^{r-1}y)&=\sum_{s,t\in\Fp}s^{p-1}t(tX-sY)^{p-2}\\
&=\sum_{s\not=0,t\in\Fp}t(tX-sY)^{p-2}
\end{align*}
and $\sum_{s\not=0}s^n$ is not zero if $n=0$ (although it is for
$1\leq n\leq p-2$ as before) so expanding out we get $-\sum_{t\in\Fp}t(tX)^{p-2}$
which is $-\sum_{t\not=0}X^{p-2}=X^{p-2}$.

\end{proof}

\begin{lem}\label{lem:computation of T in Sym p-2} In $I(\Symm^{p-2}(\Fpbar^2))$ we have
  \begin{enumerate}
  \item
    $T[1,X^{p-2}]=\sum_{\mu\in\Fp}[\smallmat{p}{[\mu]}{0}{1},X^{p-2}]$, and
  \item $T^2[1,X^{p-2}]=\sum_{\lambda,\mu\in\Fp}[\smallmat{p^2}{p[\mu]+[\lambda]}{0}{1},X^{p-2}]$.
  \end{enumerate}
\end{lem}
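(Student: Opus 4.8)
The plan is to apply the explicit formula~(\ding{37}) for $T$ twice, working throughout in $I(\Symm^{p-2}(\Fpbar^2))$. The essential point to keep in mind is that here the coefficient ring is $\Fpbar$, so $p=0$; the formula~(\ding{37}) is to be read in the variables $X,Y$ of $\Symm^{p-2}$, with $r$ replaced by $p-2$.

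First I would establish (1) by substituting $g=1$ and $v=X^{p-2}$ into~(\ding{37}). This produces a sum of $p$ terms indexed by $\lambda\in\Fp$, each of the form $[\smallmat{p}{[\lambda]}{0}{1},\,v(X,-[\lambda]X+pY)]$, together with one extra summand $[\smallmat{1}{0}{0}{p},\,v(pX,Y)]$. Since $v=X^{p-2}$ involves only the first variable, the substitution $v(X,-[\lambda]X+pY)$ returns $X^{p-2}$ for every $\lambda$, which after renaming $\lambda$ to $\mu$ gives exactly the claimed sum. The key observation is that the extra summand vanishes: $v(pX,Y)=(pX)^{p-2}=p^{p-2}X^{p-2}=0$ in $\Fpbar$ because $p-2\geq 1$. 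For (2) I would feed each term $[\smallmat{p}{[\mu]}{0}{1},X^{p-2}]$ from (1) back into~(\ding{37}), now with $g=\smallmat{p}{[\mu]}{0}{1}$. By the identical reasoning the $\smallmat{1}{0}{0}{p}$-summand dies again and the indexed substitutions leave $X^{p-2}$ unchanged, so I obtain $\sum_{\mu,\lambda\in\Fp}[\smallmat{p}{[\mu]}{0}{1}\smallmat{p}{[\lambda]}{0}{1},\,X^{p-2}]$. The final step is the matrix product $\smallmat{p}{[\mu]}{0}{1}\smallmat{p}{[\lambda]}{0}{1}=\smallmat{p^2}{p[\lambda]+[\mu]}{0}{1}$; since the double sum ranges over all of $\Fp^2$, relabeling $\lambda\leftrightarrow\mu$ puts it in the stated form $\smallmat{p^2}{p[\mu]+[\lambda]}{0}{1}$.

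No step is genuinely hard here; the whole computation is bookkeeping with~(\ding{37}), and there is no real obstacle to overcome. The only point requiring any care—and the sole place an error could enter—is remembering that $\Symm^{p-2}$ is taken over $\Fpbar$, so that $p=0$ kills the $\smallmat{1}{0}{0}{p}$-summand of~(\ding{37}) at each stage; this is what makes $X^{p-2}$ behave so cleanly under $T$. Everything else reduces to the trivial substitution $v(X,-[\lambda]X+pY)=X^{p-2}$ and the single matrix multiplication above.
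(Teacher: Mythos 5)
Your proof is correct and is exactly the paper's argument: the paper dismisses this lemma as ``immediate from~(\ding{37})'', and your computation simply spells out that application of the formula, including the two points that matter (the substitution $v(X,-[\lambda]X+pY)=X^{p-2}$ and the vanishing of the $\smallmat{1}{0}{0}{p}$-term since $p^{p-2}=0$ in $\Fpbar$ for $p>2$), plus the correct matrix product $\smallmat{p}{[\mu]}{0}{1}\smallmat{p}{[\lambda]}{0}{1}=\smallmat{p^2}{p[\lambda]+[\mu]}{0}{1}$.
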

\begin{proof} This is immediate from ~(\ref{eqn:T following Breuil}).
\end{proof}

\begin{lemma}\label{lem: first T-a computation.}Assume that $p>2$ and that $r>p$ with
  $r\equiv 1\pmod{p-1}$. Set $t=v(r-1)$ and suppose
$a\in\Qpbar$ with $v(a)>0$. Then
\begin{align*}(T-a)[g,y^r-x^{r-1}y]&\equiv[g\smallmat{p}{0}{0}{1},-px^{r-1}y]\\&+\sum_{\lambda\not=0}[g\smallmat{p}{\lambda}{0}{1},(r-1)px^{r-1}y]+[g\smallmat{1}{0}{0}{p},y^r]-[g,a(y^r-x^{r-1}y)]\end{align*} modulo
$p^{t+2}I(\Symm^r(\Zpbar^2))$.
\end{lemma}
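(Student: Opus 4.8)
The plan is to apply the defining formula~(\ref{eqn:T following Breuil}) for $T$ directly to $v:=y^r-x^{r-1}y$, reduce each of the resulting coefficient polynomials modulo $p^{t+2}$ using the combinatorial lemmas of Section~\ref{sec:combinatorial lemmas}, and then subtract $a[g,v]$. The only structural fact I need is that $w\mapsto[h,w]$ is $\Zpbar$-linear in $w$ (immediate from the definition of $[h,w]$ as the function supported on $KZh^{-1}$ with value $w$ at $h^{-1}$), so that a congruence $w\equiv w'\pmod{p^{t+2}\Zpbar[x,y]}$ of homogeneous degree-$r$ polynomials forces $[h,w]\equiv[h,w']$ modulo $p^{t+2}I(\Symm^r\Zpbar^2)$; this lets me argue coefficient-polynomial by coefficient-polynomial, independently for each matrix coset.

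First I would expand $T[g,v]$ via~(\ref{eqn:T following Breuil}). For the terms indexed by $\lambda\in\Fp$, the relevant polynomial is $v(x,-[\lambda]x+py)=(-[\lambda]x+py)^r-x^{r-1}(-[\lambda]x+py)$, which is exactly the expression handled in Lemma~\ref{first congruence mod p^t+2}. That lemma gives $v(x,-[\lambda]x+py)\equiv -px^{r-1}y\pmod{p^{t+2}}$ when $\lambda=0$ and $v(x,-[\lambda]x+py)\equiv(r-1)px^{r-1}y\pmod{p^{t+2}}$ when $\lambda\neq0$. Substituting these into the sum, and using the linearity remark above, yields the isolated term $[g\smallmat{p}{0}{0}{1},-px^{r-1}y]$ together with the sum $\sum_{\lambda\neq0}[g\smallmat{p}{[\lambda]}{0}{1},(r-1)px^{r-1}y]$.

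It remains to treat the Frobenius term $\smallmat{1}{0}{0}{p}$, where the relevant polynomial is $v(px,y)=y^r-(px)^{r-1}y=y^r-p^{r-1}x^{r-1}y$. Here the key point is simply that $p^{r-1}\equiv0\pmod{p^{t+2}}$: since $p>2$, $r>p>1$ and $r\equiv1\pmod{p-1}$, Lemma~\ref{lem: r at least t+3} applies and gives $r\geq t+3$, hence $r-1\geq t+2$. Thus $v(px,y)\equiv y^r\pmod{p^{t+2}}$, producing the term $[g\smallmat{1}{0}{0}{p},y^r]$. Subtracting $a[g,v]=[g,a(y^r-x^{r-1}y)]$ and assembling the four pieces gives the stated congruence. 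I do not anticipate any genuine obstacle: the argument is a direct substitution into~(\ref{eqn:T following Breuil}) followed by two invocations of congruences already proved. The only step needing a little care is the case split $\lambda=0$ versus $\lambda\neq0$ in Lemma~\ref{first congruence mod p^t+2}, which is precisely what separates the singleton $[g\smallmat{p}{0}{0}{1},-px^{r-1}y]$ from the sum over $\lambda\neq0$.
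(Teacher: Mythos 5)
Your proof is correct and is precisely the argument the paper intends: its own proof consists of the single line that the lemma is immediate from~(\ref{eqn:T following Breuil}) together with Lemmas~\ref{lem: r at least t+3} and~\ref{first congruence mod p^t+2}, which are exactly the three ingredients you invoke (the formula for $T$, the bound $r\geq t+3$ killing $p^{r-1}$ in the Frobenius term, and the $\lambda=0$ versus $\lambda\neq0$ case split). Your write-up simply makes explicit the routine substitution and linearity steps the paper leaves to the reader.
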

\begin{proof} This is immediate from (\ref{eqn:T following Breuil})
and Lemma~\ref{first congruence mod p^t+2}.

\end{proof}
We have $t=v(r-1)$; say $a\in\Qpbar$ satisfies $0<v(a)<1$,
and set $t_0=\min\{t+1+v(a),v(a^2-rp)\}$.
\begin{lemma}\label{Xg}Assume that $p>2$ and that $r>p$ with
  $r\equiv 1\pmod{p-1}$.  If
  $\varphi_g=\sum_{j=0}^{N}[g\smallmat{p^j}{0}{0}{1},a^j(y^r-x^{r-1}y)]$
  where $N>t_0/v(a)$, then in $I(\Symm^r(\Zpbar^2))$ we have
$$(T-a)\varphi_g\equiv\sum_{\lambda\in\Fp}[g\smallmat{p}{[\lambda]}{0}{1},(r-1)px^{r-1}y]
+[g\smallmat{1}{0}{0}{p},y^r]+[g,ax^{r-1}y]\pmod{p^{t_0}}.$$
\end{lemma}
\begin{proof} Throughout the proof we will write $\sum_{j\ge 0}$ rather than
  keeping track of the upper index of our sums, as the implied terms
  will all be zero modulo $p^{t_0}$. Since $t+2>t+1+v(a)\ge t_0$, we can apply
  Lemma \ref{lem: first T-a computation.}. Noting that if $j\ge 1$,
  $v(a^j(r-1)p)\ge t+1+v(a)\ge t_0$, we see that modulo $p^{t_0}$,
  $(T-a)\varphi_g$ is just
\begin{align*}
&[g\smallmat{p}{0}{0}{1},-px^{r-1}y]+\sum_{\lambda\not=0}[g\smallmat{p}{[\lambda]}
{0}{1},(r-1)px^{r-1}y]+[g\smallmat{1}{0}{0}{p},y^r]-[g,a(y^r-x^{r-1}y)]\\
&+\sum_{j\ge 1}[g\smallmat{p^{j+1}}{0}{0}{1},-pa^jx^{r-1}y]\\
&+\sum_{j\ge 1}[g\smallmat{p^{j-1}}{0}{0}{1},a^jy^r]\\
&+\sum_{j\ge 1}[g\smallmat{p^j}{0}{0}{1},-a^{j+1}y^r+a^{j+1}x^{r-1}y]
\end{align*}
which rearranges to
\begin{align*}
&\sum_{\lambda\not=0}[g\smallmat{p}{[\lambda]}{0}{1},(r-1)px^{r-1}y]\\
&+[g\smallmat{1}{0}{0}{p},y^r]+[g,ax^{r-1}y]\\
&+\sum_{s\ge 1}[g\smallmat{p^s}{0}{0}{1},-pa^{s-1}x^{r-1}y]\\
&+\sum_{s\ge 1}[g\smallmat{p^s}{0}{0}{1},a^{s+1}y^r]\\
&+\sum_{s\ge1}[g\smallmat{p^s}{0}{0}{1},-a^{s+1}y^r+a^{s+1}x^{r-1}y]
\end{align*}
where we have changed variables from $j$ to $s$ to make
all the sums involve $g\smallmat{p^s}{0}{0}{1}$, and put two
of the ``initial'' terms into the sums.  Pressing on, we get two
terms in the sums cancelling and we are left with
\begin{align*}
&\sum_{\lambda\not=0}[g\smallmat{p}{[\lambda]}{0}{1},(r-1)px^{r-1}y]\\
&+[g\smallmat{1}{0}{0}{p},y^r]+[g,ax^{r-1}y]\\
&+\sum_{s\ge 1}[g\smallmat{p^s}{0}{0}{1},a^{s-1}(a^2-p)x^{r-1}y]\\
\end{align*}
By the definition of $t_0$ we have $ap(r-1)\equiv 0\pmod{p^{t_0}}$ and
$a^2-rp\equiv 0\pmod{p^{t_0}}$, so we see that if $s\ge 2$ we have
  $a^{s-1}(a^2-p)\equiv a^{s-2}(a(a^2-rp)+ap(r-1))\equiv 0 \pmod{p^{t_0}}$.
Thus we can simplify further to
\begin{align*}
&\sum_{\lambda\not=0}[g\smallmat{p}{[\lambda]}{0}{1},(r-1)px^{r-1}y]\\
&+[g\smallmat{1}{0}{0}{p},y^r]+[g,ax^{r-1}y]\\
&+[g\smallmat{p}{0}{0}{1},(a^2-p)x^{r-1}y].
\end{align*}
Finally, since $a^2\equiv rp\pmod{p^{t_0}}$, the last term can be
inserted into the sum by allowing
$\lambda=0$.\end{proof}

\begin{lemma}
  \label{cor: (T-a)X}Assume that $p>2$ and that $r>p$ with
  $r\equiv 1\pmod{p-1}$.  If
  \[\varphi=-p\varphi_1+\sum_{\mu\in\Fp} a\varphi_{\smallmat{p}{[\mu]}{0}{1}}+[1,\sum_{\mu\in\Fp} ([\mu] x+y)^r-rpx^{r-1}y]\]
  (where $\varphi_g$ is as in the statement of Lemma~\ref{Xg}), and
if $t_1=t_0+\min\{v(a),1-v(a)\}>t_0$, then 
\begin{align*}(T-a)\varphi&\equiv
\sum_{\lambda,\mu\in\Fp}[\smallmat{p^2}{p[\lambda]+[\mu]}{0}{1},a(r-1)px^{r-1}y]\\
&+[1,ap(r-1)x^{r-1}y]\\
&+\sum_{\lambda\in\Fp}[\smallmat{p}{[\lambda]}{0}{1},(a^2-rp)x^{r-1}y]\pmod{p^{t_1}}.
\end{align*}
\end{lemma}
\begin{proof} First we note that $t+2\ge t_1$
 (because $t+2=t+1+v(a)+(1-v(a))\geq t_0+(1-v(a))\geq t_1$).
By Lemma \ref{lem: r at least t+3} we have $r\ge t+3$
and hence $r\ge t_1+1>t_1$, so $p^r\equiv 0\pmod{p^{t_1}}$. We also
see from Lemma \ref{binomial} and the inequality $r-1\ge t_1$
that \[\sum_{\mu\in\Fp}([\mu]px+y)^r\equiv py^r\pmod{p^{t_1}}.\]
 Using these facts and 
Lemma~\ref{Xg} (for the first two terms in the definition of
  $\varphi$) and (\ref{eqn:T following Breuil}) (for the final term), we see
  that modulo $p^{t_1}$, we have
  \begin{align*}
(T-a)\varphi&\equiv
\left[\smallmat{1}{0}{0}{p},-py^r\right]+\left[1,-pax^{r-1}y\right]\\
&+[1,a\sum_{\mu\in\Fp} ([\mu] x+y)^r]+\sum_{\mu\in\Fp}\left[\smallmat{p}{[\mu]}{0}{1},a^2x^{r-1}y\right]\\
&+\sum_{\lambda,\mu\in\Fp}\left[\smallmat{p^2}{p[\lambda]+[\mu]}{0}{1},a(r-1)px^{r-1}y\right]\\
&+[1,-a\sum_{\lambda\in\Fp}([\lambda] x+y)^r+arpx^{r-1}y]\\
&+\sum_{\lambda\in\Fp}\left[\smallmat{p}{[\lambda]}{0}{1},\left(\sum_{\mu\in\Fp}([\mu] x-[\lambda] x+py)^r\right)-rpx^{r-1}(-[\lambda] x+py)\right]
+[\smallmat{1}{0}{0}{p},py^r].
\end{align*}
Now some terms cancel, and we get
\begin{align*}
&\sum_{\lambda,\mu\in\Fp}[\smallmat{p^2}{p[\lambda]+[\mu]}{0}{1},a(r-1)px^{r-1}y]\\
&+[1,ap(r-1)x^{r-1}y]\\
&+\sum_{\lambda\in\Fp}\left[\smallmat{p}{[\lambda]}{0}{1},\left(\sum_{\mu\in\Fp}([\mu] x-[\lambda] x+py)^r\right)+a^2x^{r-1}y-rpx^{r-1}(-[\lambda] x+py)\right].
\end{align*}
Again noting that $t+2\ge t_1$, by Corollary \ref{cor: sum of
  powers of lx-mx+py} we have\[\sum_{\mu\in\Fp}([\mu] x-[\lambda] x+py)^r\equiv-[\lambda]
rpx^r+rp(p-1)x^{r-1}y\pmod{p^{t_1}},\] and the result follows.
\end{proof}

\begin{cor}\label{cor: description of thetabar in the three cases}  Assume that $p>2$ and that $r>p$ with
  $r\equiv 1\pmod{p-1}$, and that $0<v(a)<1$.
  \begin{enumerate}
  \item If $v(r-1)+1+v(a)>v(a^2-rp)$, then $\Thetabar_{k,a}$ is a
    quotient of $I(\det\otimes\Symm^{p-2}\Fpbar^2)/T$.
  \item If $v(r-1)+1+v(a)\le v(a^2-rp)$, then $\Thetabar_{k,a}$ is a
    quotient of $I(\det\otimes\Symm^{p-2}\Fpbar^2)/(T^2-\overline{\tau}T+1)$  where $\tau=\frac{rp-a^2}{ap(r-1)}$.
 \end{enumerate}

\end{cor}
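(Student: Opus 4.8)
The plan is to push the relation computed in Corollary~\ref{cor: (T-a)X} through the reduction map and read off an annihilating polynomial in the Hecke operator~$T$. First I would observe that $\varphi\in I(\Symm^r(\Zpbar^2))$: every coefficient appearing in its definition ($-p$, $a$, the $a^j$, $1$, $rp$) lies in $\Zpbar$ because $v(a)>0$. Hence the image of $\varphi$ in $\Theta_{k,a}$ is defined, and since $T$ acts as $a$ on $\Pi_{k,a}$, the element $(T-a)\varphi$ maps to $0$ in $\Theta_{k,a}$. Writing $\Phi$ for the right-hand side of Corollary~\ref{cor: (T-a)X}, it follows that the image of $\Phi$ in $\Theta_{k,a}$ is $\equiv 0\pmod{p^{t_1}}$.

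Next I would divide by $p^{t_0}$. The coefficients occurring in $\Phi$ are $a(r-1)p$ (of valuation $t+1+v(a)\ge t_0$) and $a^2-rp$ (of valuation $v(a^2-rp)\ge t_0$), so $p^{-t_0}\Phi$ again lies in $I(\Symm^r(\Zpbar^2))$, and its image in $\Theta_{k,a}$ has valuation $\ge t_1-t_0=\min\{v(a),1-v(a)\}>0$; therefore this image reduces to $0$ in $\Thetabar_{k,a}$. On the other hand, the reduction of $p^{-t_0}\Phi$ can be computed explicitly. Since the surjection onto $\Thetabar_{k,a}$ factors through $\Psi$, and $\Psi(x^{r-1}y)=X^{p-2}$ by Lemma~\ref{lem:computation of Psi on elements}, the term $[g,c\,x^{r-1}y]$ becomes $\overline{p^{-t_0}c}\,[g,X^{p-2}]$ after applying $p^{-t_0}$ and reducing. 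Recognising the sum over $\Fp$ and the double sum over $\Fp\times\Fp$ as $T[1,X^{p-2}]$ and $T^2[1,X^{p-2}]$ via Lemma~\ref{lem:computation of T in Sym p-2}, I obtain the relation
\[
\overline{c_2}\,T^2[1,X^{p-2}]+\overline{c_0}\,[1,X^{p-2}]+\overline{c_1}\,T[1,X^{p-2}]=0
\]
in $\Thetabar_{k,a}$, where $c_2=c_0=p^{-t_0}a(r-1)p$ and $c_1=p^{-t_0}(a^2-rp)$.

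Finally I would run the two cases. In case~(1) we have $t_0=v(a^2-rp)<t+1+v(a)$, so $\overline{c_2}=\overline{c_0}=0$ while $\overline{c_1}$ is a unit, giving $T[1,X^{p-2}]=0$. In case~(2) we have $t_0=t+1+v(a)\le v(a^2-rp)$, so $\overline{c_2}=\overline{c_0}$ is a unit, while $c_1/c_2=(a^2-rp)/(a(r-1)p)=-\tau\in\Zpbar$; dividing by the unit $\overline{c_2}$ yields $(T^2-\overline{\tau}\,T+1)[1,X^{p-2}]=0$. To conclude, I note that $\Symm^{p-2}(\Fpbar^2)$ is an irreducible $\GL_2(\Fp)$-representation, so $X^{p-2}$ generates $\det\otimes\Symm^{p-2}\Fpbar^2$ as a $K$-module and hence $[1,X^{p-2}]$ generates $I(\det\otimes\Symm^{p-2}\Fpbar^2)$ as a $G$-module. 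Since the relevant polynomial $P(T)$ in $T$ kills this generator and $T$ commutes with the $G$-action, the $G$-equivariant composite $\pi\circ P(T)$ vanishes on a generating set, so $\pi$ factors through $I(\det\otimes\Symm^{p-2}\Fpbar^2)/(P(T))$, as required. The conceptual heart of the argument—controlling $(T-a)\varphi$ to precision $p^{t_1}$—has already been done in Corollary~\ref{cor: (T-a)X}; the only genuine hazard here is keeping the normalisations straight when dividing by $p^{t_0}$ and verifying the identification $c_1/c_2=-\tau$.
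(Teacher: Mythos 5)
Your proposal is correct and follows essentially the same route as the paper: the paper likewise forms $\psi=p^{-t_0}\varphi$, observes via Corollary~\ref{cor: (T-a)X} that $(T-a)\psi$ is integral so its reduction lies in the kernel of $I(\Symm^r\Fpbar^2)\onto\Thetabar_{k,a}$, pushes through $\Psi$ using Lemmas~\ref{lem:computation of Psi on elements} and~\ref{lem:computation of T in Sym p-2} to obtain $T[1,X^{p-2}]$ resp.\ $(T^2-\overline{\tau}T+1)[1,X^{p-2}]$ in the kernel, and invokes the irreducibility of $\det\otimes\Symm^{p-2}\Fpbar^2$ so that $[1,X^{p-2}]$ generates, exactly as you do. Your only deviation is cosmetic bookkeeping (dividing $\Phi$ by $p^{t_0}$ after passing to $\Theta_{k,a}$ rather than rescaling $\varphi$ first), and your case analysis and the identification $c_1/c_2=-\tau$ match the paper's computation.
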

\begin{proof}
  Set $\psi=p^{-t_0}\varphi$, with $\varphi$ as in Lemma~\ref{cor: (T-a)X}. By
  the definition of $t_0$, we see that both $v(a(r-1)p)$, and
  $v(a^2-rp)$ are at least $t_0$, so that $(T-a)\psi$ is integral, by
  Lemma~\ref{cor: (T-a)X}. Thus $\overline{(T-a)\psi}$ is in the
  kernel of the natural map
  $I(\Symm^r\Fpbar^2)\onto\Thetabar_{k,a}$. We will now compute
  $\Psi(\overline{(T-a)\psi})$ in both cases, and hence
  deduce the claim.
  \begin{enumerate}
  \item If $v(r-1)+1+v(a)>v(a^2-rp)$, then we see from
Lemma~\ref{cor: (T-a)X} that  $\overline{(T-a)\psi}$ is a unit times
    $\sum_{\lambda\in\Fp}[\smallmat{p}{[\lambda]}{0}{1},x^{r-1}y]$. By
    Lemma \ref{lem:computation of Psi on elements}(3) and Lemma
    \ref{lem:computation of T in Sym p-2}(1), we see that
    $\Psi(\overline{(T-a)\psi})$ is a unit times $T[1,X^{p-2}]$ and the
    result follows. (Note that $\det\otimes\Symm^{p-2}\Fpbar^2$ is
    irreducible, and in particular is generated by $X^{p-2}$.)
  
  \item If $v(r-1)+1+v(a)\le v(a^2-rp)$, then writing
    $\tau=\frac{rp-a^2}{ap(r-1)}$, we see that  $\overline{(T-a)\psi}$ is a
    unit times \begin{align*}
&\sum_{\lambda,\mu\in\Fp}[\smallmat{p^2}{p[\lambda]+[\mu]}{0}{1},x^{r-1}y]\\
&+[1,x^{r-1}y]\\
&-\overline{\tau}\sum_{\lambda\in\Fp}[\smallmat{p}{[\lambda]}{0}{1},x^{r-1}y].
\end{align*}By  Lemma \ref{lem:computation of Psi on elements}(3) and Lemma
    \ref{lem:computation of T in Sym p-2}, we see that
    $\Psi(\overline{(T-a)\psi})$ is a unit times $(T^2-\overline{\tau}T+1)[1,X^{p-2}]$,
    as required.

  \end{enumerate}

\end{proof}

\begin{cor}
  \label{cor: explicit description of Vbar in the 3 cases}Assume that $p>2$ and that $r>p$ with
  $r\equiv 1\pmod{p-1}$, and that $0<v(a)<1$.
  \begin{enumerate}
  \item If $v(r-1)+1+v(a)>v(a^2-rp)$, then $\Vbar_{k,a}\cong\ind(\omega_2^2)$ is irreducible.
  \item If $v(r-1)+1+v(a)\le v(a^2-rp)$, then
    $\omega^{-1}\otimes\Vbar_{k,a}$ is unramified, and the trace of
    $\Frob_p$ on this representation is  $\overline{t}$, where $t=\frac{rp-a^2}{ap(r-1)}$.
  \end{enumerate}
\end{cor}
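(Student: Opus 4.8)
The plan is to read the statement off directly, by combining the explicit description of $\overline{\Theta}_{k,a}$ furnished by Corollary \ref{cor: description of thetabar in the three cases} with the passage back to $\Vbar_{k,a}$ supplied by Lemma \ref{lem: p-adic LL lets us read off rhobar}. These two auxiliary results were arranged so that their cases line up exactly, under the same dichotomy between $v(r-1)+1+v(a)$ and $v(a^2-rp)$; thus essentially all the work is already done, and what remains is a short case split.

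First I would dispose of case (1). If $v(r-1)+1+v(a)>v(a^2-rp)$, then Corollary \ref{cor: description of thetabar in the three cases}(1) tells us that $\overline{\Theta}_{k,a}$ is a quotient of $I(\det\otimes\Symm^{p-2}\Fpbar^2)/T$. Feeding this into Lemma \ref{lem: p-adic LL lets us read off rhobar}(1) yields at once that $\Vbar_{k,a}\cong\ind(\omega_2)$ is irreducible, which is the assertion in this case.

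In case (2), assume $v(r-1)+1+v(a)\le v(a^2-rp)$. I would first observe that this very inequality forces $\tau=\frac{rp-a^2}{ap(r-1)}$ to lie in $\Zpbar$, since $v(\tau)=v(a^2-rp)-v(a)-1-v(r-1)\ge 0$, so that its reduction is defined. Corollary \ref{cor: description of thetabar in the three cases}(2) then exhibits $\overline{\Theta}_{k,a}$ as a quotient of $I(\det\otimes\Symm^{p-2}\Fpbar^2)/(T^2-\overline{\tau}T+1)$, and applying Lemma \ref{lem: p-adic LL lets us read off rhobar}(2) with $c=\overline{\tau}$ shows that $\Vbar_{k,a}$ is reducible, that $\omega^{-1}\otimes\Vbar_{k,a}$ is unramified, and that the trace of $\Frob_p$ on it is $\overline{\tau}$. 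Since $\omega^{-1}\otimes\Vbar_{k,a}$ has trivial determinant (as $r\equiv 1\pmod{p-1}$), arithmetic and geometric Frobenius necessarily share this trace, consistent with the remark following the main theorem.

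Since the substantive content is already in place—the Hecke-operator congruences modulo $p^{t_0}$ and $p^{t_1}$ culminating in Corollary \ref{cor: description of thetabar in the three cases}, together with the $p$-adic local Langlands input packaged in Lemma \ref{lem: p-adic LL lets us read off rhobar}—there is no serious mathematical obstacle remaining; this final corollary is a two-line deduction. The one point that genuinely repays attention is the bookkeeping of the trace: one must check the integrality of $\tau$ so that its reduction makes sense, and then match the polynomial $T^2-\overline{\tau}T+1$ against the normal form $T^2-cT+1$ demanded by the lemma, reading off the Frobenius trace $\overline{t}$ with the correct sign (the derived value $\overline{\tau}$ and the quantity $\overline{t}$ appearing in the statement must be reconciled carefully here).
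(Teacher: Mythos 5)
Your proposal is correct and takes essentially the same route as the paper, whose entire proof is the one-line deduction from Lemma \ref{lem: p-adic LL lets us read off rhobar} and Corollary \ref{cor: description of thetabar in the three cases} that you spell out (your check that $v(\tau)\ge 0$, so that $\overline{\tau}$ makes sense, is implicit there). On the sign point you flag at the end: your derivation is the right one --- combining the two cited results gives trace $\overline{\tau}$ with $\tau=\frac{rp-a^2}{ap(r-1)}$, which agrees with the main theorem's $\frac{(k-2)p-a^2}{ap(k-3)}$ under $r=k-2$, so the quantity $t=\frac{a^2-rp}{ap(r-1)}$ printed in the corollary's statement differs from it by a sign (a typo in the statement, since $\overline{t}=-\overline{\tau}\neq\overline{\tau}$ unless the trace vanishes), and this is a defect of the printed statement rather than of your argument.
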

\begin{proof}This is immediate from Lemma \ref{lem: p-adic LL lets us
    read off rhobar} and Corollary \ref{cor: description of thetabar in the three cases}.  
\end{proof}

\bibliographystyle{amsalpha}
\bibliography{buzzardgee}

\end{document}